\newtheorem{theorem}{Theorem}[section]
\newtheorem{lemma}[theorem]{Lemma}
\theoremstyle{definition}
\newtheorem{definition}[theorem]{Definition}
\newtheorem{conjecture}[theorem]{Conjecture}
\theoremstyle{remark}
\newtheorem{remark}[theorem]{Remark}
\numberwithin{equation}{section}
\numberwithin{equation}{section}
\newsavebox{\savepar}
\begin{document}
	
	\title{A study of the {\it Prandtl Batchelor} problem using variational method}
	\author{Debajyoti Choudhuri$^{2}$, Jiabin Zuo$^{1}$\footnote{Corresponding
			author: zuojiabin88@163.com; ORCID ID: 0000-0002-5858-063X}\\
			\small{1 School of Mathematics and Information Science,
Guangzhou University,} \\ \small{Guangzhou 510006, China.}\\
		\small{2 School of Basic Sciences, Indian Institute of Technology Bhubaneswar,}\\ \small{Khordha - 752050, Odisha, India.}}

	\date{}
	\maketitle
	
\begin{abstract}
\noindent In this paper, we investigate the existence of nontrivial weak {solutions} for the {\it Prandtl-Batchelor} type free boundary value elliptic problem driven by a power nonlinearity. The algebraic topology approach will be used to establish the existence of {solutions of approximate problem}, while variational techniques will be used to determine the existence of major problem solutions. In the process several classical results are improved.
\begin{flushleft}
{\bf Keywords}:~ Dirichlet free boundary value problem, Sobolev space, Morse relation, cohomology group.\\
{\bf AMS Classification}:~35J35, 35J60.
\end{flushleft}
\end{abstract}
\section{Introduction}

Consider the following {\it free boundary value} problem
\begin{equation}\label{probmain}
\begin{cases}
-\Delta_{p}u = \lambda\chi_{\{u>1\}} (u-1)_+^{q-1}, &\text{in}~\Omega\setminus H(u)\\
|\nabla u^+|^{p}-|\nabla u^-|^p=\frac{p}{p-1},~&\text{in}~H(u),\\
u = 0, &\text{on}~\partial\Omega
\end{cases}
\end{equation}
where $\lambda> 0$ is a parameter, $(u-1)_+=\max\{u-1,0\}$ and
$$H(u)=\partial\{u>1\}.$$ Also $\nabla u^{\pm}$ are the limits of $\nabla u$ from the sets $\{u>1\}$ and $\{u\leq 1\}^{\circ}$ respectively. The domain $\Omega\subset\mathbb{R}^N (N\geq 2)$ is bounded with a sufficiently smooth boundary $\partial\Omega$ with the {\it exterior sphere condition}. The relation between the exponents are assumed in the order $$2 \leq p \leq q-1, ~\text{with}~ q<p^*=\dfrac{Np}{N-p}.$$  The solution(s) satisfies the free boundary condition in the following sense: {for} all $\vec{\phi}\in C_0^1(\mathbb{R}^N)$ such that $u \neq 1$ a.e. on the support of $\vec{\phi}$,
\begin{align}\label{FBC}
\underset{\epsilon^+\rightarrow 0}{\lim}\int_{u=1+\epsilon^+}\left(\frac{p}{p-1}-|\nabla u|^p\right)\vec{\phi}\cdot\hat{n}dS-\underset{\epsilon^-\rightarrow 0}{\lim}\int_{u=1-\epsilon^-}|\nabla u|^p\vec{\phi}\cdot\hat{n}dS&=0,
\end{align}
where $\hat{n}$ is the outward drawn normal to $\{1-\epsilon^-<u<1+\epsilon^+\}$. Note that the sets $\{u=1\pm\epsilon^{\pm}\}$ are smooth hypersurfaces for almost all $\epsilon^{\pm}>0$ by the Sard theorem. The limit above in \eqref{FBC} is taken by running such $\epsilon^{\pm}>0$ towards zero.

The readers should note that while attempting this problem we have encountered many theorems that are still open to attempt and hence the proposed problem has not been entirely solved for all $p$. A major reason behind this failure is the absence of $C^2$ regularity of solutions to PDEs involving $p$-Laplace operators. However, we have improved the results for the case of $p\geq 2$ wherever it was possible to prove them. For instance, the derivation of the free boundary condition has been done for $p\geq 2$ whereas the proof of the monotonicity lemma in Lemma \ref{conv_res1} has been done from the case of $p=2$. Further, for the interested readers we have conjectured in Conjecture \ref{OP_1} that the monotonicity result may hold if the monotonicity function is chosen in a particular way.

{The readers interested to develope} {a taste of the subject of variational }principles {may refer to the following works} \cite{chou_zamp, ghosh_pos, gu, ren}, \cite{heider2}-\cite{heider1}, \cite{zhangX2}-\cite{zhangX7} {besides the} {classical book by} {\sc Gilbarg-Trudinger} \cite{Gil_trud}.

The trend of applying the topics from algebraic topology in elliptic PDEs is not very old (see \cite{mama}). A rich literature survey has been done in the book due to {\sc Perera et al.} \cite{pererabook} where the {authors have} discussed problems of several variety involving the $p$-Laplace operators which could be studied using the Morse theory. The motivation for the current work has been drawn from the work due to {\sc Perera}  \cite{Perera_NoDEA} {who has considered a sublinear problem}. We also refer the readers to the latest work due to {\sc Choudhuri-Repov\v{s}} \cite{chou_jmaa2}, {{\sc Fotouhi et al.}\cite{Fotouhi_et_al}}. {For a ready reckoner to the techniques used in the study} {of various kinds of elliptic PDEs, we refer the readers to} {\cite{Ferreira_et_al, Sun2_et_al, Sun1_et_al, zhang_et_al}}. The treatment used to address the existence of at least one (or two) solution(s) to the approximating problem may be classical (section $3$, Theorems \ref{thm1} and \ref{twosoln}) but the result concerning the regularity of the free boundary is very new and the question of existence of solution to the problem \eqref{probmain} has not been answered till now (section $4$, Lemma \ref{convergence1}), to the best of my knowledge. A result due to {\sc Alt-Caffarelli} \cite{alt_caffa} (section $4$, Lemma \ref{conv_res1}) were improved to the best possible extent to suit the purpose of the problem in this paper.

{The free boundary value problem} \eqref{probmain} is actually a variant of the well known Prandtl-Batchelor free boundary value problem. Consider the problem

\begin{equation}\label{PB_Prob}
\begin{cases}
-\Delta u = \lambda \chi_{\{u>1\}}(x),~&\text{in}~\Omega\setminus H(u),\\
|\nabla u^+|^{2}-|\nabla u^-|^2=2,~&\text{in}~H(u),\\
u = 0, &\text{on}~\partial\Omega.
\end{cases}
\end{equation}
This is the well known {\it Prandtl-Batchelor} free boundary value problem, where the phase $\{u>1\}$ is a representation of the vortex patch bounded by the vortex line $u=1$ in a steady fluid flow for $N=2$ (refer {\sc Batchelor} \cite{Batchelor1, Batchelor2}). Thus the current problem defined in \eqref{probmain} is a more generalized version of \eqref{PB_Prob}. For a more physical application to this problem we direct the reader's attention to the work due to {\sc Caflisch} \cite{Caflisch}, {\sc Elcrat and Miller} \cite{Elc_mil}.

Another instance of occurrence of such a phenomena is in the non-equilibrium system of melting of ice. In a given block of ice, the heat equation can be solved with a given set of appropriate initial/boundary conditions in order to determine the temperature. However, if there exists a region of ice in which the temperature is greater than the melting point of ice, this subdomain will be filled with water. The boundary thus formed due to the ice-water interface is controlled by the solution of the heat equation.

Thus encountering a {\it free boundary} in the nature is not unnatural. The problem in this paper is a large enough generalization to this physical phenomena which besides being a new addition to the literature can also serve as a note to bridge the problems in elliptic PDEs with algebraic topology.
\section{Preliminaries}
We recall some of the results from algebraic topology that will be used to study the considered problem \eqref{probmain}. Throughout this discourse {we let $X$ be a topological} space and $A\subset X$ be a topological subspace. A fundamental tool that will be used to work with, namely the {\it deformation retraction} (refer Definition $5.3.2$) and the {\it homology theory} (refer Definition $6.1.12$), can be found in the book by {\sc Papageorgiou et al.} \cite{dusanbook}. We now recall the definition of deformation retraction and Homology group on a pair of topological spaces.

\begin{definition}\label{deformation}
	A continuous map $F: X\times [0,1]\to X$ is a deformation retraction of a space $X$ onto a subspace $A$ if, for every $x \in X$ and $a \in A$, $F(x,0)=x$, $F(x,1)\in A$, and $F(a,1)=a$.
\end{definition}
\begin{definition}\label{defn1}
A {\it homology group} on a family of pairs of spaces $(X,A)$ consists of:
\begin{enumerate}
	\item A sequence $\{H_{k}(X,A)\}_{k\in \mathbb{N}_0}$  of abelian groups is known as {\it homology group} for the pair $(X, A)$ (note that for the pair $(X,\phi)$, we write $H_k(X), k \in \mathbb{N}_0$). Here $\mathbb{N}_0=\mathbb{N}\cup\{0\}$.
		\item To every map of pairs $\varphi :(X,A)\rightarrow(Y,B)$ is associated a homomorphism
		$\varphi^{*} : H_k(X,A)\rightarrow H_k(Y,B)$ for all $k \in \mathbb{N}_0$.
			\item 	To every $k \in \mathbb{N}_0$ and every pair $(X,A)$ is associated a homomorphism
			$\partial : H_k(X,A)\rightarrow H_{k-1}(A)$ for all $k \in \mathbb{N}_0$.
		\end{enumerate}
	These items satisfy the following axioms.
	\begin{description}
		\item[($A_1$)] If $\varphi=id_{X}$, then $\varphi_{*}=id|_{H_k(X,A)}$.
		\item[($A_2$)] If $\varphi:(X,A)\rightarrow (Y,B)$ and $\psi:(Y,B)\rightarrow (Z,C)$ are maps of pairs, then $(\psi\circ\varphi)_{*}=\psi_{*}\circ\varphi_{*}$.
		\item[($A_3$)] If $\varphi:(X,A)\rightarrow(Y,B)$ is a map of pairs, then $\partial\circ\varphi_{*}=(\varphi|_{A})_{*}\circ\partial$.
	\item[($A_4$)] If $i:A\rightarrow X$ and $j:(X,\phi)\rightarrow(X,A)$ are inclusion maps, then the following sequence is exact
	$$...\xrightarrow[]{\partial} H_{k}(A)\xrightarrow[]{i_{*}} H_k(X)\xrightarrow[]{j_{*}} H_k(X,A)\xrightarrow[]{\partial} H_{k-1}(A)\rightarrow...$$
		Recall that a chain $...\xrightarrow[]{\partial_{K+1}} C_{k}(X)\xrightarrow[]{\partial_{k}} C_{K-1}(X)\xrightarrow[]{\partial_{k-1}} C_{k-2}(X)\xrightarrow[]{\partial_{k-2}} ...$ is said to be {\it exact} if $im(\partial_{k+1})=ker(\partial_k)$ for each $k\in\mathbb{N}_0$.
	\item[($A_5$)] If $\varphi, \psi:(X,A)\rightarrow(Y,B)$ are homotopic maps of pairs, then $\varphi_{*}=\psi_{*}$.
	\item[($A_6$)] (Excision): If $U\subseteq X$ is an open set with $\bar{U}\subseteq \text{int}(A)$ and $i:(X\setminus U,A\setminus U)\rightarrow (X,A)$ is the inclusion map, then $i_{*}:H_{k}(X\setminus U,A\setminus U)\rightarrow H_k(X,A)$ is an isomorphism.
		\item[($A_7$)] If $X=\{*\}$, then $H_k({*})=0$ for all $k\in\mathbb{N}$.
	\end{description}
\end{definition}

\noindent The deformation lemma which will be quintessential in computing the homology groups can be found in the Lemma $5.5.1$, \cite{kesavanbook}.
We will be using the {\it Palais-Smale} condition which is a special type of compactness that can be referred to the Definition $5.5.1$ in the book \cite{kesavanbook}. Following is the definition of Morse index which will be used in the subsequent sections.
\begin{definition}
	Morse index of a functional $J:V\rightarrow\mathbb{R}$ is defined to be the maximum subspace of $V$ such that $J''$, the second Fr\'{e}chet derivative, is negative definite on it.
\end{definition}
\subsection{Space description}
We begin by defining the standard Lebesgue space $L^{p}(\Omega)$ for $1\leq p<\infty$ as\\
$$L^{p}(\Omega)=\left\lbrace u:\Omega\rightarrow\mathbb{R}:u \;{\text{is measurable and}} \int_{\Omega}|u|^{p}dx < \infty\right\rbrace$$ endowed with the norm
$\|u\|_{p}=\left(\int_{\Omega}|u|^pdx\right)^{\frac{1}{p}}$. We will define the Sobolev space as $$W^{1,p}(\Omega)=\lbrace u\in L^{p}(\Omega):\nabla u\in (L^{p}(\Omega)^N\rbrace$$ with the norm $\|u\|_{1,p}^p=\|u\|_{p}+\|\nabla u\|_{p}$. We further define
$$W_0^{1,p}(\Omega)=\lbrace u\in W^{1,p}(\Omega):u=0~\text{on}~\partial\Omega\rbrace.$$
The associated norm to the space will be denoted by
$\|u\|=\|\nabla u\|_{p}$. With these norms, $L^{p}(\Omega)$, $W^{1,p}(\Omega)$ and $W_0^{1,p}(\Omega)$ are separable, reflexive Banach spaces(\cite{kesavanbook}).
\noindent The embedding results pertaining to the Sobolev spaces can be referred to the Lemma $2.4.1$ in \cite{kesavanbook}.
\section{The way to tackle the problem using Morse theory}
We at first define an energy functional associated to the problem in \eqref{probmain} which is as follows.
\begin{align*}
\begin{split}
I(u)&=\int_{\Omega}\frac{|\nabla u|^{p}}{p}dx+\int_{\Omega}\chi_{\{u>1\}}(x)dx-\lambda\int_{\Omega}\frac{(u-1)_+^{q}}{q}dx.
\end{split}
\end{align*}
However, this functional is not even differentiable and hence poses serious issues as far as the application of variational theorems are concerned. Thus we approximate $I$ using the following functionals that varies with respect to a parameter $\alpha>0$. This method is adapted from the work of {\sc Jerison-Perera} \cite{J_P_1}. We define a smooth function $g:\mathbb{R}\rightarrow [0,2]$ as follows:
\[g(t)= \begin{cases}\label{smooth_func_1}
0, & \text{if}~t\leq 0 \\
g(t)>0, & \text{if}~0<t<1\\
0, & \text{if}~t\geq 1
\end{cases}\]
and $\int_0^1g(t)dt=1$.
We further let $G(t)=\int_0^tg(t)dt$. Clearly, $G$ is smooth and nondecreasing function such that
\[G(t)= \begin{cases}\label{smooth_func_2}
0, & \text{if}~t\leq 0 \\
0<G(t)<1, & \text{if}~0<t<1\\
1, & \text{if}~t\geq 1.
\end{cases}\]
We thus define
\begin{align*}
\begin{split}
I_{\alpha}(u)&=\int_{\Omega}\frac{|\nabla u|^{p}}{p}dx+\int_{\Omega}G\left(\frac{u-1}{\alpha}\right)dx-\lambda\int_{\Omega}\frac{(u-1)_+^{q}}{q}dx.
\end{split}
\end{align*}
This functional $I_{\alpha}$, is of at least $C^2$ class and hence
\begin{align*}
\langle I_{\alpha}''(u)v,w\rangle=&\int_{\Omega}[|\nabla u|^{p-2}\nabla v\cdot\nabla w+(p-2)|\nabla u|^{p-4}(\nabla u\cdot\nabla v)(\nabla u\cdot\nabla w)]dx\\
&+\int_{\Omega}\frac{1}{\alpha^2}g'\left(\frac{u-1}{\alpha}\right)vwdx-\lambda\int_{\Omega}(u-1)_+^{q-2}vwdx.
\end{align*}
Following is an important result in Morse theory which explains the effect of the associated Homology groups on the set $K_{J,(-\infty,a]}=\{x\in V:J(x)\leq a\}$.
\begin{theorem}\label{homology1}
Let $J\in C^2(V,\mathbb{R})$ satisfy the Palais-Smale condition and let `$a$' be a regular value of $J$. Then if, $H_*(V,J^a)\neq 0$, implies that $K_{J,(-\infty,a]}\neq\emptyset$.
\end{theorem}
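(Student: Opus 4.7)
The plan is to argue the contrapositive: assuming $K_{J,(-\infty,a]}=\emptyset$, I would construct a strong deformation retraction of $V$ onto $J^a$, whence the homotopy axiom $(A_5)$ together with the long exact sequence of the pair $(A_4)$ forces $H_k(V,J^a)=0$ for every $k\in\mathbb{N}_0$, contradicting $H_*(V,J^a)\neq 0$. The engine of the retraction is the deformation lemma stated just above the theorem, applied repeatedly to the levels $c>a$ where critical points may still sit.

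First I would fix a pseudo-gradient vector field $X$ for $J$ on the complement of the critical set and consider the induced descending flow $\sigma(t,v)$. Because no critical points lie at or below level $a$, the interval $(-\infty,a]$ contains no critical values, so the deformation lemma provides, for each critical value $c>a$, an $\epsilon(c)>0$ and a homotopy that carries $K^{c+\epsilon}$ into $K^{c-\epsilon}$ while keeping points outside $J^{-1}([c-\epsilon,c+\epsilon])$ fixed. The Palais--Smale condition ensures that the critical values in any compact interval of $(a,\infty)$ form a finite set, so I can enumerate the relevant levels $c_1>c_2>\cdots$ above $a$ and splice the corresponding local deformations with the negative flow between them, reparameterising time so the whole concatenation sits in $[0,1]$.

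The hard part is verifying that this spliced homotopy $\eta:[0,1]\times V\to V$ is globally continuous and that every trajectory actually lands in $J^a$ in finite (reparameterised) time. If some trajectory failed to reach $J^a$, then $J(\sigma(t,v))$ would decrease to some limit $L\geq a$, and standard pseudo-gradient estimates yield a sequence along which $J$ is bounded and $J'\to 0$; invoking the Palais--Smale condition produces a critical point at level $L$. Since we have excluded critical points at levels $\leq a$, one must have $L>a$, and the local deformation at the critical value $L$ lets us step the flow across this level and keep descending; this is exactly where the iterative splicing becomes essential. Continuity at points whose orbits accumulate on critical points is handled by choosing the $\epsilon(c)$ small enough that the supports of the local deformations are mutually disjoint.

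Once $\eta$ is in place with $\eta(0,\cdot)=\mathrm{id}_V$, $\eta(1,V)\subset J^a$, and $\eta(t,\cdot)|_{J^a}=\mathrm{id}_{J^a}$, Definition \ref{deformation} identifies $J^a$ as a deformation retract of $V$. Axiom $(A_5)$ then makes the inclusion $J^a\hookrightarrow V$ a homotopy equivalence of pairs $(J^a,J^a)\simeq(V,J^a)$, so all relative homology groups $H_k(V,J^a)$ vanish. This contradicts the standing hypothesis $H_*(V,J^a)\neq 0$ and establishes the theorem. The principal technical obstacle, as flagged above, is the global splicing of the local deformation lemma across (possibly infinitely many) critical values in $(a,\infty)$; the Palais--Smale condition and the $C^2$ regularity of $J$ are precisely what make this splicing admissible.
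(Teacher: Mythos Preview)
The paper states this theorem without proof, so there is no argument to compare against; I evaluate your proposal on its own terms.

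There is a genuine gap in the engine of your argument. You invoke the deformation lemma ``for each critical value $c>a$'' to carry $K^{c+\epsilon}$ into $K^{c-\epsilon}$. But the deformation lemma recorded in the paper has as its hypothesis precisely $K_{J,c}=\emptyset$: it lets you slide across \emph{regular} levels, not critical ones. At a genuine critical value $c$ the critical points in $K_{J,c}$ are fixed by any pseudo-gradient flow, and there is in general no homotopy of $K^{c+\epsilon}$ into $K^{c-\epsilon}$ relative to the complement; this is exactly the obstruction that produces nontrivial critical groups. So the ``splicing'' you describe across the levels $c_1>c_2>\cdots>a$ cannot be carried out: each $c_i$ is a wall you cannot push through with the tool you cite. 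The later paragraph about trajectories accumulating at some $L>a$ suffers from the same defect --- the local deformation you appeal to at the critical level $L$ simply does not exist.

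This is not a repairable technicality; it points to the statement itself. Take $V=\mathbb{R}$, $J(x)=-x^{2}$, $a=-1$: then $J$ is $C^2$, satisfies (PS), $a$ is regular, $J^a=(-\infty,-1]\cup[1,\infty)$, and $H_1(V,J^a)\cong H_1([-1,1],\{-1,1\})\neq 0$, yet the only critical point sits at level $0>a$, so $K_{J,(-\infty,a]}=\emptyset$. The standard Morse-theoretic result has the inequality the other way: nontriviality of $H_*(V,J^a)$ forces a critical value in $[a,\infty)$, whose contrapositive (no critical points above $a$ $\Rightarrow$ $J^a$ is a deformation retract of $V$) is exactly what your flow argument would establish if you had assumed $K_{J,[a,\infty)}=\emptyset$ instead. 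In the paper's application (Theorem~\ref{thm1}) this direction is in fact what is used, since the sought critical point lies above the chosen small level $\epsilon$.
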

\begin{remark}\label{rem1}
Before we apply the Morse lemma we recall that for a Morse function the following holds
\begin{enumerate}
 \item $$H_{*}(J^c,J^c\setminus \text{Crit}(J,c))=\oplus_{j}H_*(J^c\cap N_j,J^c\cap N_j\setminus\{x_j\}),$$
 where $\text{Crit}(J,c)=\{x\in V:J(x)=c, J'(x)=0\}$, $N_j$ is a neighbourhood of $x_j$.
 \item \[H_{k}(J^c\cap N,J^c\cap N\setminus\{x\})= \begin{cases}
\mathbb{R}, & k=m(x) \\
0, & \text{otherwise}
\end{cases}\]
where $m(x)$ is a Morse index of $x$, a critical point of $J$.\\
\item Further $$H_k(J^a,J^b)=\oplus_{\{i:m(x_i)=k\}}\mathbb{R}=\mathbb{R}^{m_k(a,b)}$$ where $m_k(a,b)=n(\{i:m(x_i)=k,x_i\in K_{J,(a,b)}\})$. Here $n(S)$ denotes the number of elements present in the set $S$.
\item Morse relation
$$\sum_{u\in K_{J,[a,b]}}\sum_{k\geq 0}\text{dim}(C_k(J,u)) t^k=\sum_{k\geq 0}\text{dim}(H_k(J^{a},J^{b})) t^k+(1+t)\mathcal{Q}_t$$ for all $t\in\mathbb{R}$. Here $\mathcal{Q}_t$ is a nonnegative polynomial in $\mathbb{N}_0[t]$.
\end{enumerate}
\end{remark}
\begin{theorem}\label{thm1}
The functional $I_{\alpha}$ has at least one nontrivial critical point when $0<\lambda\leq \lambda_1$, $\lambda_1$ being the first eigenvalue of $(-\Delta_{p})$.
\end{theorem}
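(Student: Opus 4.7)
My plan is to produce a nontrivial critical point of $I_\alpha$ by a mountain pass / Morse-relation argument. First observe that $u\equiv 0$ is already a critical point of $I_\alpha$: in any $W_0^{1,p}$-neighborhood of the origin in which $u\le 1$ a.e.\ one has $(u-1)_+=0$, and since $g$ is supported in $[0,1]$ the integrand $g((u-1)/\alpha)$ vanishes identically there as well, so $I_\alpha'(0)=0$. The task is therefore to produce a second critical point at strictly higher energy level.

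To verify the Palais-Smale condition, take a sequence $(u_n)\subset W_0^{1,p}(\Omega)$ with $I_\alpha(u_n)$ bounded and $I_\alpha'(u_n)\to 0$ in $W^{-1,p'}(\Omega)$. The Ambrosetti-Rabinowitz combination $qI_\alpha(u_n)-\langle I_\alpha'(u_n),u_n\rangle$ reduces, using $G\in[0,1]$ and the fact that $g$ is bounded with integrand supported on the thin strip $\{1\le u_n\le 1+\alpha\}$, to
\[
\Bigl(\tfrac{q}{p}-1\Bigr)\|\nabla u_n\|_p^p + \lambda\int_{\{u_n>1\}}(u_n-1)^{q-1}\,dx \le C + o(\|u_n\|).
\]
Because $q>p$ and the second integral is non-negative, $(u_n)$ is bounded in $W_0^{1,p}(\Omega)$. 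Extracting a weakly convergent subsequence and using compactness of $W_0^{1,p}\hookrightarrow L^q$ (valid since $q<p^{*}$) together with the $(S_+)$-property of $-\Delta_p$, one upgrades to strong convergence.

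Next I check the mountain pass geometry. One has $I_\alpha(0)=|\Omega|\,G(-1/\alpha)=0$. Using $(u-1)_+^q\le u_+^q$ and Sobolev embedding, $I_\alpha(u) \ge \tfrac{1}{p}\|u\|^p - \tfrac{\lambda C_q}{q}\|u\|^q$, so $q>p$ forces $0$ to be a strict local minimum of $I_\alpha$ with a positive barrier on some small sphere $\|u\|=\rho_0$. Taking $\phi_1>0$ the first $(-\Delta_p)$-eigenfunction in $W_0^{1,p}(\Omega)$ and letting $t\to\infty$, the $q$-growth of $-\lambda\int(t\phi_1-1)_+^q/q$ dominates the $p$-Dirichlet term and drives $I_\alpha(t\phi_1)\to-\infty$. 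A standard mountain pass theorem now yields a critical point $u^{*}$ at level $c\ge \tfrac{1}{p}\rho_0^p-\tfrac{\lambda C_q}{q}\rho_0^q>0=I_\alpha(0)$, so $u^{*}\not\equiv 0$. Equivalently, the sublevel geometry gives $H_1(W_0^{1,p}(\Omega),I_\alpha^{-a})\neq 0$ for $a$ large, and the Morse relation of Remark~\ref{rem1} forces a critical point whose critical group in degree one is non-zero, necessarily distinct from the strict local minimum at $0$ (whose critical groups are concentrated in degree zero). The hypothesis $\lambda\le\lambda_1$ is used to keep the mountain pass barrier and the resulting energy levels compatible with the later passage $\alpha\to 0$ performed in Section~4.

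The main obstacle I expect is the clean handling of the concentrated term $\alpha^{-1}g((u-1)/\alpha)u$: it is supported on the thin strip $\{1<u<1+\alpha\}$ and behaves like a surface integral in the limit $\alpha\to 0$. For fixed $\alpha$ it is harmless, bounded crudely by $(1+\alpha)\alpha^{-1}\|g\|_\infty|\Omega|$, but the Palais-Smale estimate and the mountain pass barrier must be phrased so that their $\alpha$-dependence is tracked, so that the critical point $u^{*}=u^{*}_\alpha$ produced here can be followed through the compactness analysis of the next section.
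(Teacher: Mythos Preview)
Your argument is correct and rests on the same geometric observation as the paper's: $I_\alpha(0)=0$, there is a positive barrier on a small sphere because $q>p$, and $I_\alpha(tu)\to-\infty$ as $t\to\infty$. The difference is purely in presentation. You invoke the classical mountain pass theorem directly, after first verifying the Palais--Smale condition via the Ambrosetti--Rabinowitz combination $qI_\alpha(u_n)-\langle I_\alpha'(u_n),u_n\rangle$ (which is fine; the $G$ and $g$ terms are uniformly bounded for fixed $\alpha$, and the $(S_+)$-property of $-\Delta_p$ gives strong convergence). The paper, in keeping with its algebraic-topology theme, instead picks a small regular value $\epsilon>0$, observes that the sublevel set $I_\alpha^\epsilon$ has at least two path components (a neighbourhood of $0$ and the set where $\|u\|$ is large), so $\dim H_0(I_\alpha^\epsilon)\ge 2$; the long exact sequence of the pair then forces $\dim H_1(W_0^{1,p}(\Omega),I_\alpha^\epsilon)\ge 1$, and a contradiction with the Morse relation rules out $0$ being the only critical point. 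What your route buys is brevity and transparency; what the paper's route buys is an explicit illustration of the homological machinery it advertises, and it avoids having to spell out the (PS) verification. One small remark: your last sentence about the role of the hypothesis $\lambda\le\lambda_1$ is speculative --- neither your proof nor the paper's actually uses it in this theorem, since the barrier estimate only needs $q>p$.
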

\begin{proof}
We observe that $I_{\alpha}(tu)\rightarrow-\infty$ as $t\rightarrow\infty$. Furthermore, a key observation here is that there exists $(r,\lambda)$ sufficiently small positive numbers such that
$$I_{\alpha}(u)\geq A>0$$ whenever $\|u\|=r$. This demonstrates that the functional simply has the Mountain Pass geometry.

We note that $I_{\alpha}$ obeys the Palais-Smale $(PS)$ condition, for $p\geq 2$. In order to prove this, we define $$u_n^+(x):=\max\{u_n(x),0\}, u^++u^-:=(u-1)_++[1-(u-1)_-]=u.$$
	We further note that
	\begin{align}\label{PS_cond_pf}
	\begin{split}
	I_{\alpha}(u_n)&\geq p^{-1}\|u_n\|^p-\frac{\lambda}{q}\int_{\Omega}(u_n)_+^qdx\\
	\langle	I_{\alpha}'(u_n),u_n\rangle&\leq \|u_n\|^p-\lambda\int_{\Omega}(u_n^+)^qdx+\frac{2}{\alpha}|\Omega|.
	\end{split}
	\end{align}
	Consider $c\in\mathbb{R}$ and
	\begin{align}\label{PS_cond_pf1}
	\begin{split}
	c+\sigma\|u_n\|+o(1)\geq I_{\alpha}(u_n)-\frac{1}{q}\langle	I_{\alpha}'(u_n),u_n\rangle&\geq \left(p^{-1}-q^{-1}\right)\|u_n\|^p-\frac{2}{\alpha}|\Omega|.
	\end{split}
	\end{align}
	This implies that $(u_n)$ is bounded in $W_0^{1,p}(\Omega)$. This implies that there exists a subsequence of $(u_n)$ such that $u_n\rightharpoonup u$ in $W_0^{1,p}(\Omega)$, $u_n\to u$ in $L^q(\Omega)$ and $u_n(x)\to u(x)$ a.e. in $\Omega$. Since $\langle	I_{\alpha}'(u_n),v\rangle\to 0$ as $n\to\infty$ we have
	\begin{align}\label{PS_cond_pf2}
	\begin{split}
	\underset{n\to\infty}{\lim}\int_{\Omega}|\nabla u_n|^{p-2}{\nabla}u_n\cdot\nabla vdx&=	 \underset{n\to\infty}{\lim}\left[\int_{\Omega}\frac{1}{\alpha}g\left(\frac{u_n-1}{\alpha}\right)vdx+\lambda\int_{\Omega}(u_n-1)_+^{q-1}vdx\right]
	\end{split}
	\end{align}
for all $v\in W_0^{1,p}(\Omega)$. In particular on choosing $v=u_n-u$ in \eqref{PS_cond_pf2} we obtain
	\begin{align}\label{PS_cond_pf3}
	\begin{split}
	\underset{n\to\infty}{\lim}\int_{\Omega}|\nabla u_n|^{p-2}{\nabla}u_n\cdot\nabla (u_n-u)dx=	 \underset{n\to\infty}{\lim}&\left[\int_{\Omega}\frac{1}{\alpha}g\left(\frac{u_n-1}{\alpha}\right)(u_n-u)dx\right.\\
	&\left.+\lambda\int_{\Omega}(u_n-1)_+^{q-1}(u_n-u)dx\right]=0.
	\end{split}
	\end{align}
	Hence $u_n\to u$ in $W_0^{1,p}(\Omega)$. Therefore the functional $I_{\alpha}$ satisfies the (PS) condition.
We choose $\epsilon>0$ such that $c=\epsilon$ is a regular value of $I_{\alpha}$. Thus, $I_{\alpha}^{\epsilon}$ is not path connected since it has at least two path connected components namely in the form of a neighbourhood of $0$ and a set $\{u:\|u\|\geq R\}$ for $R$ sufficiently large. Therefore, from the theory of homology groups we get that $\text{dim} (H_0(I_{\alpha}^{\epsilon}))\geq 2$; `dim' denoting the dimension of the Homology group. From the Definition \ref{defn1}-$(A_4)$ of homology group, let us consider the following exact sequence
$$...\rightarrow H_1(W_{0}^{1,p}(\Omega),I_{\alpha}^{\epsilon})\xrightarrow[]{\partial_1}H_0(I_{\alpha}^{\epsilon},\emptyset)\xrightarrow[]{i_0}H_0(W_{0}^{1,p}(\Omega),\emptyset)\rightarrow...$$

We know that $\text{dim} (H_0(W_{0}^{1,p}(\Omega),\emptyset))=1$ and $\text{dim}(H_0(I_{\alpha}^{\epsilon}))\geq 2$. Moreover, due to the exactness of the sequence we conclude that $\text{dim}H_1(W_{0}^{1,p}(\Omega),I_{\alpha}^{\epsilon})\geq 1$. Thus by the Theorem \ref{homology1} we have $K_{I_{\alpha},(-\infty,\epsilon]}\neq\emptyset$.

Suppose that the only critical point to \eqref{probmain} is $u=0$ at which the energy of the functional $I_{\alpha}$ is also $0$. Thus from the discussion above and the Remark \eqref{rem1}-(4) we have from the Morse relation the following identity over $\mathbb{R}$
$$1=t+\mathcal{P}(t)+(1+t)\mathcal{Q}_t,$$ $\mathcal{P}$ being a power series in any $t\in\mathbb{R}$, $\mathcal{Q}_t\geq 0$. This leads to a contradiction since a nonzero polynomial can never be an identity on $\mathbb{R}$. Thus there exists at least one $u\neq 0$ which is a critical point to $I_{\alpha}$ whenever $\lambda\leq \lambda_1$.
\end{proof}
\noindent
\begin{remark}\label{moser}
Without any demonstration we quote that a standard application of the Moser iteration technique leads to the conclusion that a solution $u$ of \eqref{probmain} is in $L^{\infty}(\Omega)$.
\end{remark}
\noindent We refer the reader to the Definition $1.3$ in \cite{ghosh_pos} for the definition of {\it Krasnoselskii genus}.
 \noindent To each closed and symmetric subsets $M$ of $W_0^{1,p}(\Omega)$ with the Krasnoselskii genus $\gamma(M)\geq k$, define
 $$\lambda_k=\inf_{M\in\mathfrak{F}_k}\sup_{u\in M}I_{\alpha}(u).$$  Here $\mathfrak{F}_k=\{M\subset W_0^{1,p}(\Omega),~\text{closed and symmetric}:\gamma(M)\geq k\}$. A natural question at this point will be to ask if the same conclusion as in Theorem \ref{thm1} can be drawn when $\lambda_k<\lambda\leq\lambda_{k+1}$. We will define $\lambda_0=0$. The next theorem answers this question.
\begin{theorem}\label{twosoln}
	The problem in \eqref{probmain} has at least one nontrivial solution when $\lambda_i<\lambda\leq\lambda_{j}$, $\lambda_i, \lambda_j$ being as defined above, for some $i,j$.
\end{theorem}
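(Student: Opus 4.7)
The plan follows the Morse-theoretic template of Theorem \ref{thm1}, but replaces the degree-zero input (two path components) by the degree-$(k-1)$ input encoded in the Krasnoselskii genus. I would argue in three steps: establish the Palais-Smale condition for $I_\alpha$, extract a nontrivial homology class in degree $k-1$ of a suitable sublevel set from the hypothesis $\lambda>\lambda_k$, and close the argument by contradiction via the Morse relation in Remark \ref{rem1}(4).

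First I would verify that $I_\alpha$ satisfies (PS) on $W_0^{1,p}(\Omega)$: since $q<p^\ast$ the embedding $W_0^{1,p}(\Omega)\hookrightarrow L^q(\Omega)$ is compact, while $0\leq G\leq 1$ is uniformly bounded, so any (PS) sequence is bounded in $W_0^{1,p}(\Omega)$, and the $(S_+)$-property of $-\Delta_p$ promotes weak convergence to strong convergence. Next, from $\lambda>\lambda_k$ and the genus-min-max definition of $\lambda_k$, I would pick $\delta>0$ small with $\delta<\lambda_{k+1}-\lambda$, together with a closed symmetric set $M\in\mathfrak F_k$ satisfying $\sup_{u\in M}I_\alpha(u)\leq\lambda-\delta$, so that $M\subset I_\alpha^{\lambda-\delta}$. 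Monotonicity of the genus then gives $\gamma(I_\alpha^{\lambda-\delta})\geq k$, whence the standard cohomological lower bound for the genus (as developed in \cite{pererabook}) yields $\dim\tilde H_{k-1}(I_\alpha^{\lambda-\delta})\geq 1$.

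Using the contractibility of $W_0^{1,p}(\Omega)$, the reduced exact sequence of the pair collapses to
\begin{equation*}
H_k\bigl(W_0^{1,p}(\Omega),I_\alpha^{\lambda-\delta}\bigr)\xrightarrow{\partial}\tilde H_{k-1}(I_\alpha^{\lambda-\delta})\to 0,
\end{equation*}
so that $\dim H_k(W_0^{1,p}(\Omega),I_\alpha^{\lambda-\delta})\geq 1$. The existence theorem stated immediately before Remark \ref{rem1} then produces a critical point $u_\ast$ of $I_\alpha$ with $I_\alpha(u_\ast)\leq\lambda-\delta$. Supposing for contradiction that $u=0$ is the only critical point of $I_\alpha$, the Morse relation of Remark \ref{rem1}(4) would read
\begin{equation*}
t^{m(0)}=\sum_{j\geq 0}\dim H_j\bigl(W_0^{1,p}(\Omega),I_\alpha^{\lambda-\delta}\bigr)\,t^j+(1+t)\mathcal Q_t,\qquad \mathcal Q_t\in\mathbb N_0[t],
\end{equation*}
which is impossible because the right-hand side contains a nontrivial $t^k$ coefficient (with $k\geq 1$) that the single monomial on the left cannot absorb. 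Hence $u_\ast\neq 0$, as required.

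The main obstacle I anticipate is rigorously translating ``$\gamma(S)\geq k$'' into ``$\dim \tilde H_{k-1}(S)\geq 1$'' for a general closed symmetric subset of the infinite-dimensional space $W_0^{1,p}(\Omega)$; the cleanest route is an equivariant Borsuk-Ulam argument of the type developed in \cite{pererabook}, rather than through CW approximation. A secondary subtlety is that the hypothesis $\lambda\leq\lambda_{k+1}$ is used precisely via the constraint $\delta<\lambda_{k+1}-\lambda$, which guarantees that no extraneous critical values accumulate between $\lambda-\delta$ and $\lambda_{k+1}$ to cancel the newly produced $H_k$ class in the Morse polynomial; without this upper bound the contradiction in the last display would need to be reorganised over a longer interval and could fail.
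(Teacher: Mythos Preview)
Your route diverges substantially from the paper's, and the divergence occurs precisely at the step you flag as the ``main obstacle''. The paper does \emph{not} try to manufacture a nonzero class in $H_k(W_0^{1,p}(\Omega),I_\alpha^{c})$ from the genus hypothesis. Instead it goes to a very negative level $-a$ and shows that \emph{all} the relative groups vanish: using that $I_\alpha(tu)\to-\infty$ along rays, it builds a deformation retraction of $W_0^{1,p}(\Omega)\setminus B_{R'}(0)$ onto $I_\alpha^{-a}$ and then identifies $H_k(W_0^{1,p}(\Omega),I_\alpha^{-a})$ with $H_k(B^\infty,S^\infty)=0$. The Morse relation over $[-a,\infty)$ then reads $\sum_u t^{m(u)}=(1+t)\mathcal Q_t$, and a single monomial cannot equal $(1+t)\mathcal Q_t$ with $\mathcal Q_t\ge 0$; the hypothesis on $\lambda$ enters only through the (somewhat informal) discussion bounding the Morse index.

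Your proposed bridge ``$\gamma(S)\ge k\Rightarrow \tilde H_{k-1}(S)\neq 0$'' is a genuine gap, not a technicality. The Krasnoselskii genus is an \emph{equivariant} invariant: for a free $\mathbb Z_2$-space $S$ it is governed by powers of $w_1$ in $H^*(S/\mathbb Z_2;\mathbb Z_2)$, and this does not force any ordinary homology of $S$ in degree $k-1$ to survive. Moreover, $I_\alpha$ is not even (both $G((u-1)/\alpha)$ and $(u-1)_+^q$ break $u\mapsto -u$), so the sublevel set $I_\alpha^{\lambda-\delta}$ is not symmetric; you cannot invoke monotonicity of the genus for it, and even the weaker route ``$M\subset I_\alpha^{\lambda-\delta}$ with $\tilde H_{k-1}(M)\neq 0$'' would still require the inclusion to be nonzero on $\tilde H_{k-1}$, which you have no control over. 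Finally, your stated use of $\lambda\le\lambda_{k+1}$ via $\delta<\lambda_{k+1}-\lambda$ collapses at the endpoint $\lambda=\lambda_{k+1}$, and in any case the constraint plays no visible role in the contradiction you write down. The paper sidesteps all of this by working at $-a$ where the topology is that of the contractible pair $(B^\infty,S^\infty)$.
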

\begin{proof}
We at first show that $H_k(W_0^{1,p}(\Omega),I_{\alpha}^{-a})=0$ for all $k\geq 0$.	Towards this we pick a $u\in\{v:\|v\|=1\}=\partial B^{\infty}$, where $B^{\infty}=\{v:\|v\|\leq 1\}$. Therefore there exists $t_0>0$ such that $$I_{\alpha}(tu)=\int_{\Omega}\frac{|\nabla (tu)|^{p}}{p}dx+\int_{\Omega}G\left(\frac{tu-1}{\alpha}\right)dx-\lambda\int_{\Omega}\frac{(tu-1)_+^{q}}{q}dx< -a<0$$ for all $t\geq t_0$. We observe from the Remark \ref{moser} that for $t>0$ small enough, the sign of $tu-1$ becomes negative, whence, there exists $\bar{t}>0$ such that $I_{\alpha}'(\bar{t}u)>0$. Thus, there exists $t(u)$ such that $I_{\alpha}'(t(u)u)=0$ by the continuity of $I_{\alpha}'$. We can thus say that there exists a $C^1$-function $T:W_0^{1,p}(\Omega)\setminus\{0\}\rightarrow\mathbb{R}^{+}$. We now define a standard  deformation retract $\eta$ of $W_0^{1,p}(\Omega)\setminus B_{R'}(0)$ into $I_{\alpha}^{-a}$ as follows:
 \[\eta(s,u)=\begin{cases}
(1-s)u+sT\left(\frac{u}{\|u\|}\right)\frac{u}{\|u\|}, & \|u\|\geq R', I_{\alpha}(u)\geq -a\\
u, & I_{\alpha}(u)\leq -a.
\end{cases}\]
It is not difficult to see that $\eta$ is a $C^1$ function over $[0,1]\times W_0^{1,p}(\Omega)\setminus B_{R'}(0)$. On using the map $\Theta(s,u)=\dfrac{u}{\|u\|}$, for $u\in W_0^{1,p}(\Omega)\setminus B_{R'}(0)$ we claim that $H_k(W_0^{1,p}(\Omega),W_0^{1,p}(\Omega)\setminus B_r(0))=H_k(B^{\infty},S^{\infty})$ for all $k\geq 0$. This is because, $H_k(B^{\infty},S^{\infty})\cong H_k(*,0)$. From the elementary computation of homology groups with two {\it $0$-dimensional simplices} it is easy to see that $H_k(*,0)=\{0\}$ for each $k\geq 0$.
Therefore, from the Morse relation in the Remark \eqref{rem1}-4 and the result above, we have for $b>0$
\begin{align}
\sum_{u\in K_{I,[-a,\infty)}}\sum_{k\geq 0}\text{dim}(C_k(I,u))t^k&=t^{m(u)}+p(t)
\end{align}
where $m(u)$ is the Morse index of $u$ and $\mathcal{P}(t)$ contains the rest of the powers of $t$ corresponding to the other critical points, if any. The Morse index is finite because of the following reason:~ The mountain pass geometry around $0$ allows in establishing a maxima (say $u_0$) along with the assumption $\lambda<C^{-q}\frac{q}{p}\|u\|^{p-q}$. Owing to $u_0$ being a maxima, we have $I_{\alpha}''(u_0)<0$ which necessarily requires $\lambda>C^{-q}\frac{p-1}{q-1}\|u\|^{p-q}$. Thus we have $$C^{-q}\frac{p-1}{q-1}\|u\|^{p-q}<\lambda<C^{-q}\frac{q}{p}\|u\|^{p-q}.$$ This implies that $\lambda_i<\lambda<\lambda_j$ for some $i,j\in\mathbb{N}_0$. On further using the Morse relation we obtain
 \begin{align}
 t^{m(u)}+\mathcal{P}(t)&=(1+t)\mathcal{Q}_t.
 \end{align}
 This is because the $H_k$s are all trivial groups. Hence, $Q_t$ either contains $t^{m(u)}$ or $t^{m(u)-1}$ or both. Thus there exists at least one nontrivial $u\in K_{I_{\alpha},[-a,\infty)}$ with $m(u)<\infty$.
\end{proof}
\begin{remark}\label{k_solns}
As a note if $0<\lambda\leq\lambda_{k+1}$, then there exists at least $k$ solutions to the equation \eqref{probmain} when considered over domains with sufficiently regular boundary.
\end{remark}
\begin{remark}[see Iannizzotto et al. \cite{Ian2}]\label{p_lap}
Suppose $D$ is a bounded domain with a sufficiently smooth boundary, then the problem
\begin{align}\label{prob_p_laplace}
	\begin{split}
		-\Delta_pu=&f~\text{in}~D
	\end{split}
\end{align}
has a $C_{\text{loc}}^{1,a}$ solution whenever $f$ is bounded and nothing more can be expected, irrespective of the smoothness of $f$.
\end{remark}
\section{Existence of solution to the main problem \eqref{probmain} and smoothness of the boundary $\partial\{u>1\}$}
The following lemma has been proved for $p=2$.
\begin{lemma}\label{convergence1}
	Let $\alpha_j\rightarrow 0$ ($\alpha_j>0$) as $j\rightarrow\infty$ and $u_j$ be a critical point of $I_{\alpha_j}$. If $(u_j)$ is bounded in $W_0^{1,2}(\Omega)\cap L^{\infty}(\Omega)$, then there exists $u$, a Lipschitz continuous function, on $\bar{\Omega}$ such that $u\in W_0^{1,2}(\Omega)\cap C^{1,a}(\bar{\Omega}\setminus H(u))$ and a subsequence (still denoted by $(u_j)$) such that
	\begin{enumerate}[label=(\roman*)]
		\item $u_j\rightarrow u$ uniformly over $\bar{\Omega}$,
		\item $u_j\rightarrow u$ locally in $C^1(\bar{\Omega}\setminus\{u=1\})$,
		\item $u_j\rightarrow u$ strongly in $W_0^{1,2}(\Omega)$,
		\item $I(u)\leq\lim\inf I_{\alpha_j}(u_j)\leq\lim\sup I_{\alpha_j}(u_j)\leq I(u)+|\{u=1\}|$, i.e. $u$ is a nontrivial function if $\lim\inf I_{\alpha_j}(u_j)<0$ or $\lim\sup I_{\alpha_j}(u_j)>0$.
	\end{enumerate}
	Furthermore, $u$ is a $C^{1,a}$-weak solution of $$-\Delta u=\lambda\chi_{\{u>1\}}(x)(u-1)_+^{q-1}$$ in $\Omega\setminus H(u)$, the free boundary condition is satisfied in the generalized sense and vanishes continuously on $\partial\Omega$. In the case of $u$ being nontrivial, then $u>0$ in $\Omega$, the set $\{u<1\}$ is connected and the set $\{u>1\}$ is nonempty.
\end{lemma}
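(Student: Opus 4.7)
The plan is to adapt the Jerison--Perera limit procedure \cite{J_P_1} for singularly perturbed free boundary problems: apply the improved Alt--Caffarelli estimate (Lemma~\ref{conv_res1}) to get a uniform Lipschitz bound on $(u_j)$, pass to the limit in the Euler--Lagrange equation, and then use a Pohoz\v{a}ev-type identity to recover the free boundary condition.

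First I would extract a subsequence with $u_j \rightharpoonup u$ weakly in $W_0^{1,p}(\Omega)$, strongly in $L^r(\Omega)$ for every $r<p^*$, and a.e.\ on $\Omega$. The Euler--Lagrange equation for $u_j$ reads
\[
-\Delta_p u_j \;=\; \lambda (u_j-1)_+^{q-1} \;-\; \tfrac{1}{\alpha_j}\,g\!\bigl(\tfrac{u_j-1}{\alpha_j}\bigr),
\]
and the $L^\infty$ bound on $(u_j)$ together with Lemma~\ref{conv_res1} yields a uniform Lipschitz estimate on $\bar\Omega$. Arzel\`a--Ascoli then gives uniform convergence to a Lipschitz $u$, establishing (i). For (ii), on any compact $K\subset \bar\Omega\setminus\{u=1\}$ the uniform convergence forces $|u_j-1|\geq\delta>0$ for large $j$, so the singular term vanishes on $K$ and $u_j$ satisfies a quasilinear equation with uniformly bounded, H\"older-continuous right-hand side; DiBenedetto--Tolksdorf $C^{1,\gamma}$-theory gives relative compactness in $C^1(K)$ and, after passing to the limit, the classical PDE for $u$ on $\Omega\setminus H(u)$.

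For (iii) I would test the $u_j$-equation against $u_j-u$: the nonlinear term is handled by compact Sobolev embedding, while the singular term is bounded by
\[
\Bigl|\int_\Omega \tfrac{1}{\alpha_j}\,g\!\bigl(\tfrac{u_j-1}{\alpha_j}\bigr)(u_j-u)\,dx\Bigr| \;\leq\; 2\|g\|_\infty\,\|u_j-u\|_\infty\,\alpha_j^{-1}\,\bigl|\{|u_j-1|\leq \alpha_j\}\bigr|,
\]
which vanishes as $j\to\infty$ because the non-degeneracy built into Lemma~\ref{conv_res1} yields $|\{|u_j-1|\leq \alpha_j\}| \leq C\alpha_j$. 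The $(S_+)$-property of $-\Delta_p$ then upgrades to strong $W_0^{1,p}$ convergence. The energy chain in (iv) then follows: weak lower semicontinuity of $\|\nabla\cdot\|_p^p$, $L^q$-convergence of $(u_j-1)_+^q$, and Fatou applied to $G((u_j-1)/\alpha_j)\to \chi_{\{u>1\}}$ a.e.\ on $\{u\neq 1\}$ give the lower bound, while $G\leq 1$ supported in $\{u_j>1\}$ together with $\{u_j>1\}\subset\{u>1-\varepsilon\}$ for large $j$ yield $\limsup\int G\leq |\{u\geq 1\}| = |\{u>1\}|+|\{u=1\}|$.

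The hard part is the free boundary condition \eqref{FBC}. I would multiply the $u_j$-equation by $\vec\phi\cdot\nabla u_j$ for $\vec\phi\in C_0^1(\mathbb{R}^N)$, use the chain-rule identity $\alpha_j^{-1}g((u_j-1)/\alpha_j)\nabla u_j=\nabla[G((u_j-1)/\alpha_j)]$, and the Pohoz\v{a}ev-type identity
\[
-(\Delta_p u_j)(\vec\phi\cdot\nabla u_j) \;=\; \operatorname{div}\Bigl(|\nabla u_j|^{p-2}(\vec\phi\cdot\nabla u_j)\nabla u_j-\tfrac{|\nabla u_j|^p}{p}\vec\phi\Bigr)+\tfrac{\operatorname{div}\vec\phi}{p}\,|\nabla u_j|^p,
\]
integrate by parts, and pass to the limit using $C^1_{loc}$-convergence off $\{u=1\}$ together with the improved concentration Lemma~\ref{positive_Rad_meas}; the $G$-derivative term localizes on $\{u=1\}$ and produces exactly the jump in \eqref{FBC}. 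Positivity of $u$ follows from the strong maximum principle applied to $-\Delta_p u\geq 0$; nonemptiness of $\{u>1\}$ follows since otherwise $u$ would be $p$-harmonic with zero boundary data on all of $\Omega$, forcing $u\equiv 0$ against the nontriviality in (iv); and connectedness of $\{u<1\}$ follows because any component not touching $\partial\Omega$ would be a bounded $p$-harmonic region with boundary value $1$, forcing $u\equiv 1$ there by the strong maximum principle and contradicting that the component lies in $\{u<1\}$.
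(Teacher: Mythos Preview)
Your overall strategy---uniform Lipschitz via Lemma~\ref{conv_res1}, Arzel\`a--Ascoli, local $C^1$ convergence away from $\{u=1\}$, and a Pohoz\v{a}ev identity for the free boundary condition---is exactly the route the paper follows. The derivation of \eqref{FBC} in particular is the same: multiply by $\vec\phi\cdot\nabla u_j$, rewrite the singular term as $\nabla G((u_j-1)/\alpha_j)\cdot\vec\phi$, integrate by parts over $\{1-\epsilon^-<u_j<1+\epsilon^+\}$, and pass to the limit.

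There is, however, a genuine gap in your argument for (iii). You claim that ``the non-degeneracy built into Lemma~\ref{conv_res1}'' yields $|\{|u_j-1|\leq\alpha_j\}|\leq C\alpha_j$. Lemma~\ref{conv_res1} gives only an \emph{upper} gradient bound (Lipschitz continuity); it says nothing about how fast $u_j$ must grow away from the level set $\{u_j=1\}$, which is what a measure estimate of that type would require. Non-degeneracy is a separate, and for critical points (as opposed to minimizers) generally unavailable, ingredient. Without it your bound on the singular term does not close, and the $(S_+)$ argument stalls. The paper avoids this entirely: it tests the $u_j$-equation with $(u_j-1)$ rather than $(u_j-u)$, so the singular contribution becomes $\int_\Omega \tfrac{u_j-1}{\alpha_j}\,g\bigl(\tfrac{u_j-1}{\alpha_j}\bigr)\,dx\geq 0$ (since $t\,g(t)\geq 0$), and can simply be dropped to obtain $\limsup_j\|\nabla u_j\|_p^p\leq \lambda\int_\Omega(u-1)_+^q\,dx-\int_{\partial\Omega}\partial_{\hat n}u\,dS$, after which the limit equation identifies the right-hand side with $\|\nabla u\|_p^p$.

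A smaller point: Lemma~\ref{conv_res1} is an \emph{interior} estimate, so it does not by itself give Lipschitz control up to $\partial\Omega$. The paper bridges this by comparing $u_j$ with the solution $\varphi_0$ of $-\Delta_p\varphi_0=\lambda C_0$, $\varphi_0|_{\partial\Omega}=0$; this forces $\{u_j\geq 1\}\subset\{\varphi_0\geq 1\}$, hence a uniform distance $d_0>0$ from $\{u_j\geq 1\}$ to $\partial\Omega$, and standard $C^{2,a}$ estimates then handle a $d_0/2$-neighbourhood of the boundary. You should insert this step before invoking Arzel\`a--Ascoli on $\bar\Omega$.
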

\noindent An important result that will be used to pass the limit in the proof of the Lemma \ref{convergence1} is the following theorem which is in line to the theorem due to {\sc Caffarelli et al.} in \cite[Theorem $5.1$]{Caffa_jeri_kenig}.
\begin{lemma}\label{conv_res1}
	Let $u$ be a Lipschitz continuous function on the unit ball $B_1(0)\subset\mathbb{R}^N$ satisfying the distributional inequalities
	\begin{align}\label{dist_equ}
	\pm\Delta u&\leq \left(\dfrac{1}{\alpha}\chi_{\{|u-1|<\alpha\}}(x)\mathcal{H}(|\nabla u|^2)+A\right)
	\end{align}
	for constants $A>0$, $0<\alpha\leq 1$ and $\mathcal{H}$ is a continuous function such that $\mathcal{H}(t)=o(t^2)$ near infinity. Then there exists a constant $C>0$ depending on $N, A$ and $B=\int_{{B_1}(0)}u^2dx$, but not on $\alpha$, such that
	$$\underset{x\in B_{\frac{1}{2}}(0)}{\text{max}}\{|\nabla u(x)|\}\leq C.$$
\end{lemma}
\begin{proof}
The proof of $(i)-(iv)$ follows from \cite{Perera_NoDEA}. We now prove the following for $p\geq 2$:

There exists constant $C_1$ such that for any $\epsilon>0$ there exists a finite and positive number $\mu_0$ such that for any $\mu\geq\mu_0$ then
\begin{enumerate}
	\item $|S_{\mu}\cap Q_0|\leq |S_{\mu_0}\cap Q_0|<\epsilon|Q_0|$. Here $Q_0$ is a cube with length of each side being $2^{-10-10N}$ and $Q_0\cap B_{1/32}\neq\emptyset$.
	\item Suppose $Q$ is a dyadic subcube of $Q_0$ for which $|S_{C_1\mu}\cap Q|\geq \epsilon|Q|$, then $Q\subset Q^*\subset S_{\mu}$, where $Q$ is an immediate dyadic subcube of $Q^*$.
\end{enumerate}
{\it Proof}:~We only sketch the proof as the ideas are borrowed from \cite{Caffa_jeri_kenig}. The claim in $1.$ follows from the argument given in \cite{Caffa_jeri_kenig}.\\
To begin with, the maximal operator is defined as follows:
\begin{align}\label{max_oper}
\mathfrak{M}f(x)&=\underset{0<r<1/100}{\sup}\frac{1}{|B_r(x)|}\int_{B_r(x)}f(y)dy.
\end{align}
We will further denote $$S_{\mu}=\{x\in B_{1/32}:\mathfrak{M}(|\nabla v_1|^p)(x)>\mu\}.$$

Suppose $2.$ fails to hold. Then one can find a cube $Q$ such that $|S_{\mu}\cap Q|\geq\epsilon|Q|$ and $y\in Q^*$ but $\mathfrak{M}(|\nabla v_1|^p)(y)\leq \mu$. Let $\rho$ be $2^{6N}$ times the length of the sides of $Q$ and consider $\mathfrak{M}_{\rho/4}(|\nabla v_1|^p)(0)$, with the supremum taken over $(0,\rho/4)$. Since $\mathfrak{M}(|\nabla v_1|^p)(y)\leq \mu$, there exists a constant $C_2$ such that for any $x\in Q$,
\begin{align}\label{est1}
\mathfrak{M}(|\nabla v_1|^p)(x)&\leq\max\{\mathfrak{M}_{\rho/4}(|\nabla v_1|^p)(x),C_2\mu\}.
\end{align}
Let $\phi$ be such that
\begin{align}\label{harm_func}
\begin{split}
-\Delta_p\phi&=0~\text{in}~B_{\rho}(y)\\
\phi&=v_1~\text{on}~\partial B_{\rho}(y).
\end{split}
\end{align}
Since $\phi$ is a minimizer of the functional $\frac{1}{2}\int_{B_{\rho}(y)}|\nabla\phi|^p dx$, we have
\begin{align}\label{est2}
\int_{B_{\rho}(y)}|\nabla\phi|^p dx&\leq \int_{B_{\rho}(y)}|\nabla v_1|^p dx\leq\mu|B_{\rho}(y)|.
\end{align}
Of course, we have the mean value theorem (refer Theorem $1$ in {\sc Lindqvist-Manfredi} \cite{LIND}) at our disposal to guarantee that
\begin{align}\label{est3}
\underset{B_{\rho/2}(y)}{\sup}\{|\nabla \phi|^p\}&\leq C_3\mu.
\end{align}
On choosing $C_1=15\max\{C_2,C_3\mu\}$ we have
\begin{align}\label{est4}
\mathcal{A}:=\{x\in Q:\mathfrak{M}_{\rho/4}(|\nabla v_1|^p)(x)>C_1\mu\}=\{x\in Q:\mathfrak{M}(|\nabla v_1|^p)(x)>C_1\mu\}=:\mathcal{B}.
\end{align}
If $x\in\mathcal{A}$, then it is easy see that $x\in\mathcal{B}$. Thus $\mathcal{A}\subset\mathcal{B}$. Suppose $x\in\mathcal{B}$, then $\mathfrak{M}(|\nabla v_1|^p)(x)>C_1\mu$. However, by \eqref{est1} and by the choice of $C_1$ we have that $x\in\mathcal{A}$.\\
\noindent Also observe that $\{x\in Q:\mathfrak{M}_{\rho/4}(|\nabla \phi|^p)>C_1\mu/4\}=\emptyset$. For if not, then there exists $x\in Q$ such that $\mathfrak{M}(|\nabla \phi|^p)(x)>C_\mu/4$. One may thus produce $r\in (0,\rho/4)$ such that $$\frac{C_1\mu}{4}<\frac{1}{|B_r(x)|}\int_{B_r(x)}|\nabla \phi|^pdy\leq\frac{C_1\mu}{15}.$$
This is a contradiction since this leads to an absurdity $4>15$.\\
Therefore,
\begin{align}\label{est5}
\begin{split}
|\{x\in Q&: \mathfrak{M}_{\rho/4}(|\nabla v_1|^p)>C_1\mu\}|\\
&\leq|\{x\in Q: \mathfrak{M}_{\rho/4}(|\nabla (v_1-\phi)|^p)+\mathfrak{M}_{\rho/4}(|\nabla\phi|^p)>C_1\mu/2\}|\\
&\leq|\{x\in Q: \mathfrak{M}_{\rho/4}(|\nabla (v_1-\phi)|^p)>C_1\mu/4\}|+|\{\mathfrak{M}_{\rho/4}(|\nabla\phi|^p)>C_1\mu/4\}|\\
&=|\{x\in Q: \mathfrak{M}_{\rho/4}(|\nabla (v_1-\phi)|^p)>C_1\mu/4\}|.
\end{split}
\end{align}
Thus there exists a constant $C_4$, which follows by the weak $(1,1)$ inequality of $\mathfrak{M}$, such that
\begin{align}\label{est6}
C_4\mu^{-1}\int_{B_{\rho}(y)}|\nabla(v_1-\phi)|^pdx&\geq|\{x\in Q:\mathfrak{M}_{\rho/4}(|\nabla(v_1-\phi)|^p)>C_1\mu/4\}|.
\end{align}
Furthermore, by the maximum principle we have $|v_1-\phi|\leq C$ on the ball $B_{\rho}(y)$. By the weak formulation of the problem \eqref{harm_func}, we have
\begin{align}\label{est7}
0&=\int_{B_{\rho}(y)}|\nabla \phi|^{p-2}\nabla \phi\cdot\nabla(v_1-\phi)dx
\end{align}
and
\begin{align}\label{est8}
\begin{split}
-\int_{B_{\rho}(y)}\Delta_pv_1(v_1-\phi)dx&=-\int_{B_{\rho}(y)}(\Delta_pv_1-\Delta_p\phi)(v_1-\phi)dx.
\end{split}
\end{align}
Thus by the Simon  inequality (see (4.6) \cite{chou_zamp}) we have
\begin{align}\label{est8'}
\begin{split}
C_5\int_{B_{\rho}(y)}|\nabla(v_1-\phi)|^pdx&\leq \int_{B_{\rho}(y)}(|\nabla v_1|^{p-2}\nabla v_1-|\nabla \phi|^{p-2}\nabla \phi)\cdot\nabla(v_1-\phi)dx\\
&\leq -\int_{B_{\rho}(y)}(\Delta_pv_1-\Delta_p\phi)(v_1-\phi)dx\\
&=-\int_{B_{\rho}(y)}(\Delta_pv_1)(v_1-\phi)dx\\
&\leq\int_{B_{\rho}(y)}C\left(\beta|v_1|^p+A(\beta)\right)dx.
\end{split}
\end{align}
Using the above inequality \eqref{est6} obtained, we get
\begin{align}\label{est9}
\begin{split}
|\{x\in Q:\mathfrak{M}_{\rho/4}(|\nabla v_1|^p)>C_1\mu\}|&\leq C_6\left(\beta+\frac{A(\beta)}{\mu}\right)|Q|.
\end{split}
\end{align}
Thus, for a sufficiently small $\delta>0$ and large $\mu>0$ we have $C_6\delta<\epsilon/3$ and $C_6A(\beta)/\mu<\epsilon/3$. Therefore
$$\{x\in Q: \mathfrak{M}(|\nabla v_1|^p)>C_1\mu\}<\epsilon|Q|$$
which indeed is a contradiction to the hypothesis. Therefore, $2.$ holds.\\
One may now follow verbatim of \cite{Caffa_jeri_kenig} to conclude that the conclusion of $2.$ leads to
\begin{align}\label{est10}
\begin{split}
|S_{C_1^k\mu}\cap Q_0|&\leq\epsilon^{k+1}|Q_0|.
\end{split}
\end{align}

We now note from \eqref{est10} that for any $1<\theta<\infty$ a sufficiently small $\epsilon>0$ can be chosen so that $\mathfrak{M}(|\nabla v_1|^p)$ is bounded in $L^{\theta}(B_{1/16})$, i.e.
\begin{align}\label{est11}
\begin{split}
\int_{B_{1/16}}|\nabla v_1|^{\theta}dx&\leq C_{7},
\end{split}
\end{align}
where $C_7$ is a uniform constant that depends on $\theta, A, \mathcal{H}$. On choosing $\theta=N$ we have $2\theta>N$. Hence from the condition on $\mathcal{H}$, we have that for every $\beta\in(0,1]$, there exists a positive number $A(\beta)$ such that $\mathcal{H}(t)\le A(\beta)+\beta t^2$. Thus  we obtain
\begin{align}\label{est12}
\begin{split}
\underset{B_{1/32}}\sup\{|\nabla v_1|\}&\leq C_8.
\end{split}
\end{align}
Reverting back to the variables in terms of $u$ we get
\begin{align}\label{est13}
\begin{split}
\underset{B_{\alpha/320}(x)}\sup\{|\nabla u|\}&\leq C_8~\text{for any}~x\in B_{1/4}~\text{such that}~|\{u(x)<\alpha\}|.
\end{split}
\end{align}
To finally arrive at the conclusion
\begin{align}\label{est14}
\begin{split}
\underset{B_{r/4}(0)}\sup\{|\nabla u|\}&\leq C_9.
\end{split}
\end{align}
we follow the proof of \cite{Caffa_jeri_kenig} again, however with the choice of $w(x)=A_0r(r^{N-p}|x|^{p-N}-1)+A(|x|^p-r^p)+O(\alpha)$. Therefore $\underset{B_{r/2}}{\sup}\{|\nabla u|\}<\infty$.
\end{proof}

\begin{remark}\label{H-cond}
The condition $\mathcal{H}(t)=o(t^2)$ in Theorem \ref{conv_res1} can also be replaced by $\mathcal{H}(t)=o(t^p)$.
\end{remark}

\begin{proof}[Proof of Lemma \ref{convergence1}]
	Let $0<\alpha_j<1$. Consider the problem sequence $(P_j)$
	\begin{align}\label{approx_prob_1}
	\begin{split}
	-\Delta u_j&=-\frac{1}{\alpha_j}g\left(\frac{(u_j-1)_+}{\alpha_j}\right)+\lambda (u-1)_+^{q-1}~\text{in}~\Omega\\
	u_j&>0~\text{in}~\Omega\\
	u_j&=0~\text{on}~\partial\Omega.
	\end{split}
	\end{align}
	The nature of the problem being sublinear allows us to conclude by an iterative technique that the sequence $(u_j)$ is bounded in $L^{\infty}(\Omega)$. Therefore, there exists $C_0$ such that $0\leq g\left(\frac{(u_j-1)_+}{\alpha_j}\right)(u-1)_+^{q-1}\leq C_0$. Let $\varphi_0$ be a solution of
	\begin{align}\label{approx_prob_2}
	\begin{split}
	-\Delta \varphi_0&=\lambda C_0~\text{in}~\Omega\\
	\varphi_0&=0~\text{on}~\partial\Omega.
	\end{split}
	\end{align}
	Now since $g\geq 0$, we have that $-\Delta u_j\leq \lambda C_0=-\Delta\varphi_0$ in $\Omega$. Therefore by the maximum principle,
	\begin{align}\label{comparison1}
	0\leq u_j(x)\leq \varphi_0(x)~\forall x\in\Omega.
	\end{align}
	Since $\{u_j\geq 1\}\subset\{\varphi_0\geq 1\}$, hence $\varphi_0$ gives a uniform lower bound, say $d_0$, on the distance from the set $\{u_j\geq 1\}$ to $\partial\Omega$. Thus $(u_j)$ is bounded with respect to the {$C^{1,a}$-norm}. Therefore, it has a convergent subsequence in the {$C^{1,a}$-norm} in a $\dfrac{d_0}{2}$ neighbourhood of the boundary $\partial\Omega$. Obviously $0\leq g\leq 2\chi_{(-1,1)}$ and hence
	\begin{align}\label{comparison2}
	\begin{split}
	\pm\Delta u_j&=\pm\frac{1}{\alpha_j}g\left(\frac{(u_j-1)_+}{\alpha_j}\right)\mp\lambda (u_j-1)_+^{q-1}\\
	&\leq\frac{2}{\alpha_j}\chi_{\{|u_j-1|<\alpha_j\}}(x)+\lambda C_0.
	\end{split}
	\end{align}
	Since, $(u_j)$ is bounded in $L^2(\Omega)$ and by Lemma \ref{conv_res1} it follows that there exists $A>0$ such that
	\begin{align}\label{aux1}
	\underset{x\in B_{\frac{r}{2}}(x_0)}{\text{esssup}}\{|\nabla u_j(x)|\}&\leq\frac{A}{r}
	\end{align}
	for a suitable $r>0$ such that $B_r(0)\subset\Omega$. However, since $(u_j)$ is a sequence of Lipschitz continuous functions that are also $C^1$, therefore
	\begin{align}\label{aux2}
	\underset{x\in B_{\frac{r}{2}}(x_0)}{\sup}\{|\nabla u_j(x)|\}&\leq\frac{A}{r}.
	\end{align}
	Thus $(u_j)$ is uniformly Lipschitz continuous on the compact subsets of $\Omega$ such that its distance from the boundary $\partial\Omega$ is at least $\frac{d_0}{2}$ units.

	Thus by the {\it Ascoli-Arzela} theorem applied to $(u_j)$ we have a subsequence, still named the same, such that it converges uniformly to a Lipschitz continuous function $u$ in $\Omega$ with zero boundary values and with a $C^1$ convergence on a $\frac{d_0}{2}$-neighbourhood of $\partial\Omega$. By the {\it Eberlein-\v{S}mulian} theorem we conclude that $u_j\rightharpoonup u$ in $W_0^{1,2}(\Omega)$.\\
	We now prove that $u$ satisfies
	\begin{align}\label{aux_prob_sat}-\Delta u&=\alpha\chi_{\{u>1\}}(x)(u-1)_+^{q-1}\end{align}
	in the set $\{u\neq 1\}$. Let $\varphi\in C_0^{\infty}(\{u>1\})$ and therefore $u\geq 1+2\delta$ on the support of $\varphi$ for some $\delta>0$. On using the convergence of $u_j$ to $u$ uniformly on $\Omega$ we have $|u_j-u|<\delta$ for any sufficiently large $j,\delta_j<\delta$. So $u_j\geq 1+\delta_j$ on the support of $\varphi$. On testing \eqref{aux_prob_sat} with $\varphi$ yields
	\begin{align}\label{weak_conv_1}
	\int_{\Omega}\nabla u_j\cdot\nabla\varphi dx&=\lambda\int_{\Omega}(u_j-1)_+^{q-1}\varphi dx.
	\end{align}
	On passing the limit $j\rightarrow\infty$ to \eqref{aux_prob_sat}, we get
	\begin{align}\label{weak_conv_2}
	\int_{\Omega}\nabla u\cdot\nabla\varphi dx&=\lambda\int_{\Omega}(u-1)_+^{q-1}\varphi dx.
	\end{align}
	To arrive at \eqref{weak_conv_2} we have used the weak convergence of $u_j$ to $u$ in $W_0^{1,2}(\Omega)$ and the uniform convergence of the same in $\Omega$. Hence $u$ is a weak solution of $-\Delta u=\lambda (u-1)_+^{q-1}$ in $\{u>1\}$. Since $u$ is a Lipschitz continuous function, hence it is also a solution of $-\Delta u=\lambda (u-1)_+^{q-1}$ in $\{u>1\}$. Similarly on choosing $\varphi\in C_0^{\infty}(\{u<1\})$ one can find a $\delta>0$ such that $u\leq 1-2\delta$. Therefore, $u<1-\delta$.\\
On testing \eqref{aux_prob_sat} with any nonnegative function and passing the limit $j\rightarrow\infty$ and using the fact that $g\geq 0$, $G\leq 1$ we can show that $u$ satisfies
	\begin{align}\label{weak_conv_3}
	-\Delta u&\leq\lambda (u-1)_+^{q-1}~\text{in}~\Omega
	\end{align}
	in the distributional sense. We note here that the set $\{u<1\}$ is of nonzero Lebesgue measure since $u$ is continuous.
	Furthermore, we claim that $\Delta (u-1)_-\geq 0$ in the distributional sense. {The proof is as follows:}

	We follow the proof due to {\sc Alt-Caffarelli} \cite{alt_caffa}. Choose $\delta>0$ and a test function $\varphi^2\chi_{\{u<1-\delta\}}$ where $\varphi\in C_0^{\infty}(\Omega)$. Therefore,
	\begin{align}\label{app_2}
		\begin{split}
			0&=\int_{\Omega}\nabla u\cdot\nabla(\varphi^2\min\{u-1+\delta,0\})dx\\
			&=\int_{\Omega\cap\{u<1-\delta\}}\nabla u\cdot\nabla(\varphi^2\min\{u-1+\delta,0\})dx\\
			&=\int_{\Omega\cap\{u<1-\delta\}}|\nabla u|^2\varphi^2dx+2\int_{\Omega\cap\{u<1-\delta\}}\varphi(u-1+\delta)\nabla u\cdot\nabla\varphi dx,
		\end{split}
	\end{align}
	and so by Caccioppoli like estimate we have
	\begin{align}
		\begin{split}
			\int_{\Omega\cap\{u<1-\delta\}}|\nabla u|^2\varphi^2dx&=-2\int_{\Omega\cap\{u<1-\delta\}}\varphi(u-1+\delta)\nabla u\cdot\nabla\varphi dx\\
			&\leq c\int_{\Omega}u^2|\nabla\varphi|^2dx.
		\end{split}
	\end{align}
	Since $\int_{\Omega}|u|^2dx<\infty$, therefore on passing the limit $\delta\rightarrow 0$ we conclude that $u\in W_{\text{loc}}^{1,2}(\Omega)$. Furthermore, for a nonnegative $\zeta\in C_0^{\infty}(\Omega)$ and $\delta>0$ we have
	\begin{align}\label{app_3}
		\begin{split}
			-\int_{\Omega}&\nabla (u-1)_-\cdot\nabla\zeta dx\\
			=&-\int_{\Omega}\left[\nabla (u-1)_-\cdot\nabla\left(\zeta\max\left\{\min\left\{2-\frac{1-u}{\delta},1\right\},0\right\}\right)\right]dx\\
			\geq&-\int_{\Omega\cap\{1-2\delta<u<1\}}|\nabla (u-1)_-\cdot \nabla\zeta|dx.
		\end{split}
	\end{align}
	Note that by the H\"{o}lder inequality we have
	\begin{align}\label{conv_delta}
		\begin{split}
			0\leq\int_{\Omega\cap\{1-2\delta<u<1\}}|\nabla u\cdot \nabla\zeta|dx\leq&\left(\int_{\Omega\cap\{1-2\delta<u<1\}}|\nabla u|^2dx\right)^{1/2}\\
			&\times\left(\int_{\Omega\cap\{1-2\delta<u<1\}}|\nabla \zeta|^2dx\right)^{1/2}\to 0~\text{as}~\delta\to 0.
		\end{split}
		\end{align}
	Thus $\Delta(u-1)_{-}\geq 0$ in the distributional sense on $\Omega\cap\partial\{u<1\}$. Hence by Theorems $2.3-2.4$ in \cite{kil1} there exists a nonnegative Radon measure $\mu$ (say) such that $\mu=:\Delta(u-1)_{-}$ in $\Omega\cap\partial\{u<1\}$.
	
From \eqref{weak_conv_3}, the positivity of the Radon measure $\mu$ and the usage of Section $9.4$ in {\sc Gilbarg-Trudinger} \cite{Gil_trud} we conclude that $u\in W_{\text{loc}}^{2,q}(\{u\leq 1\}^{\circ})$. Thus $\mu$ is supported on $\Omega\cap\partial\{u<1\}\cap\partial\{u>1\}$ and $u$ satisfies $-\Delta u=0$ in the set $\{u\leq 1\}^{\circ}$.\\
	\noindent In order to prove $(ii)$, we will show that $u_j\rightarrow u$ locally in $C^1(\Omega\setminus\{u=1\})$. Note that we have already proved that $u_j\rightarrow u$ in the $C^1$ norm in a neighbourhood of $\partial\Omega$ of $\bar{\Omega}$. Suppose $M\subset\subset\{u>1\}$. In this set $M$ we have $u\geq 1+2\delta$ for some $\delta>0$. Thus for sufficiently large $j$, with $\delta_j<\delta$, we have $|u_j-u|<\delta$ in $\Omega$ and hence $u_j\geq 1+\delta_j$ in $M$. From \eqref{approx_prob_1} we have $$-\Delta u_j=\lambda (u_j-1)_+^{q-1}~\text{in}~M.$$
	Clearly, $(u_j-1)_+^{q-1}\rightarrow (u-1)_+^{q-1}$ in $L^q(\Omega)$ for $1<q<\infty$ and $u_j\rightarrow u$ uniformly in $\Omega$. This analysis says something more stronger - {\it since $-\Delta u_j=\lambda (u_j-1)_+^{q-1}$ in $M$, we have that $u_j\rightarrow u$ in $W^{2,q}(M)$}. By the embedding $W^{2,q}(M)\hookrightarrow C^1(M)$ for $2<q$, we have $u_j\rightarrow u$ in $C^1(M)$. This shows that $u_j\rightarrow u$ in $C^1(\{u>1\})$. Working on similar lines we can also show that $u_j\rightarrow u$ in $C^1(\{u<1\})$.\\
	\noindent We will now prove $(iii)$. Since $u_j\rightharpoonup u$ in $W_0^{1,2}(\Omega)$, we have that by the weak lower semicontinuity of the norm $\|\cdot\|$ that
	$$\|u\|\leq\lim\inf\|u_j\|.$$
	It is sufficient to prove that $\lim\sup\|u_j\|\leq \|u\|$. To achieve this, we multiply \eqref{approx_prob_1} with $(u_j-1)$ and then integrate by parts. We will also use the fact that $tg\left(\frac{t}{\delta_j}\right)\geq 0$ for any $t\in\mathbb{R}$. This gives,
	\begin{align}\label{weak_conv_4}
	\begin{split}
	\int_{\Omega}|\nabla u_j|^2dx&\leq \lambda\int_{\Omega}f(u_j-1)_+^qdx-\int_{\partial\Omega}\frac{\partial u_j}{\partial\hat{n}}dS\\
	&\rightarrow\lambda\int_{\Omega}(u-1)_+^qdx-\int_{\partial\Omega}\frac{\partial u}{\partial\hat{n}}dS
	\end{split}
	\end{align}
	as $j\rightarrow\infty$. Here $\hat{n}$ is the outward drawn normal to $\partial\Omega$.
\end{proof}

\begin{conjecture}\label{OP_1}
We leave the following as an open conjecture that if one considers the function
$$\Phi(r):=\left(\frac{1}{r^p}\int_{B_r}\frac{\nabla(|\nabla w^+|^{p-2}w^{+^{(p-1)^2}})\cdot\nabla w^+}{|x|^{N-p}}dx\right)\left(\frac{1}{r^p}\int_{B_r}\frac{\nabla(|\nabla w^-|^{p-2}w^{-^{(p-1)^2}})\cdot\nabla w^-}{|x|^{N-p}}dx\right),$$
then the estimate $$\Phi(r)\leq C\left(1+\int_{B_1}\frac{\nabla(|\nabla w^+|^{p-2}w^{+^{(p-1)^2}})\cdot\nabla w^+}{|x|^{N-p}}dx+\int_{B_1}\frac{\nabla(|\nabla w^-|^{p-2}w^{-^{(p-1)^2}})\cdot\nabla w^-}{|x|^{N-p}}dx\right)^2,$$
for each $~0<r\leq 1$ if $\Delta_pw_{\pm}\geq -1$
 may be obtained. However, a big drawback in the case of $p$-Laplace operators is that we do not have $C^2$ regularity of solutions.
 \end{conjecture}

We now establish the free boundary condition for the case of $p>2$ (for the case of $p=2$, the readers may refer to Perera \cite{Perera_NoDEA}). Towards this we choose $\vec{\varphi}\in C_0^1(\Omega,\mathbb{R}^N)$ such that $u\neq 1$ a.e. on the support of $\vec{\varphi}$. On multiplying $\nabla u_n\cdot\vec{\varphi}$ to the weak formulation of \eqref{approx_prob_1} and integrating over the set $\{1-\epsilon^-<u_n<1+\epsilon^+\}$ gives
\begin{align}\label{weak_conv_10}
\begin{split}
\int_{\{1-\epsilon^-<u_n<1+\epsilon^+\}}\left[-\Delta_p u_n+\frac{1}{\alpha_n}g\left(\frac{u_n-1}{\alpha_n}\right)\right]\nabla u_n\cdot\vec{\varphi} dx\\
=\int_{\{1-\epsilon^-<u_n<1+\epsilon^+\}}(u_n-1)_+^{q-1}\nabla u_n\cdot\vec{\varphi} dx.
\end{split}
\end{align}
The term on the left hand side of \eqref{weak_conv_10} can be expressed as follows:
\begin{align}\label{weak_conv_11}
\nabla\cdot\left(\frac{1}{p}|\nabla u_n|^p\vec{\varphi}-(\nabla u_n\cdot\vec{\varphi})|\nabla u_n|^{p-2}\nabla u_n\right)+(\nabla\vec{\varphi}\cdot\nabla u_n)\cdot\nabla u_n|\nabla u_n|^{p-2}&-\frac{1}{p}|\nabla u_n|^p\nabla\cdot\vec{\varphi}\nonumber\\
&+\nabla G\left(\frac{u_n-1}{\alpha_n}\right)\cdot\vec{\varphi}.
\end{align}
Using \eqref{weak_conv_11} and on integrating by parts we obtain
\begin{align}\label{weak_conv_12}
\begin{split}
\int_{\{u_n=1+\epsilon^+\}\cup\{u_n=1-\epsilon^-\}}\left[\frac{1}{p}|\nabla u_n|^p\vec{\varphi}-(\nabla u_n\cdot\vec{\varphi})|\nabla u_n|^{p-2}\nabla u_n+G\left(\frac{u_n-1}{\alpha_j}\right)\hat{\varphi}\right]\cdot\hat{n}dS\\
=\int_{\{1-\epsilon^-<u_n<1+\epsilon^+\}}\left(\frac{1}{p}|\nabla u_n|^p\nabla\cdot\vec{\varphi}-(\nabla\vec{\varphi}\cdot\nabla u_n)|\nabla u_n|^{p-2}\nabla u_n\right)dx\\
+\int_{\{1-\epsilon^-<u_n<1+\epsilon^+\}}\left[G\left(\frac{u_n-1}{\alpha_n}\right)\nabla\cdot\vec{\varphi}+\lambda (u_n-1)_+^{q-1}(\nabla u_n\cdot\vec{\varphi})\right]dx.
\end{split}
\end{align}
The integral on the left of equation \eqref{weak_conv_12} converges to
\begin{align}\label{weak_conv_13}
\begin{split}
&\int_{\{u=1+\epsilon^+\}\cup\{u=1-\epsilon^-\}}\left(\frac{1}{p}|\nabla u|^p\vec{\varphi}-(\nabla u\cdot \vec{\varphi})|\nabla u|^{p-2}\nabla u\right)\cdot\hat{n}dS+\int_{\{u=1+\epsilon^+\}}\vec{\varphi}\cdot\hat{n}dS\\
&=\int_{\{u=1+\epsilon^+\}}\left[1-\left(\frac{p-1}{p}\right)|\nabla u|^p\right]\vec{\varphi}\cdot\hat{n}dS-\int_{\{u=1-\epsilon^-\}}\left(\frac{p-1}{p}\right)|\nabla u|^p\vec{\varphi}\cdot\hat{n}dS.
\end{split}
\end{align}
Thus the equation \eqref{weak_conv_13} under the limit $\epsilon\rightarrow 0$ becomes
\begin{align}\label{weak_conv_14}
0=\underset{\epsilon\rightarrow 0}{\lim}\int_{\{u=1+\epsilon^+\}}\left[\left(\frac{p}{p-1}\right)-|\nabla u|^p\right]\vec{\varphi}\cdot\hat{n}dS-\underset{\epsilon\rightarrow 0}{\lim}\int_{\{u=1-\epsilon^-\}}|\nabla u|^p\vec{\varphi}\cdot\hat{n}dS.
\end{align}
This is because $\hat{n}=\pm\dfrac{\nabla u}{|\nabla u|}$ on the set $\{u=1+\epsilon^{+}\}\cup\{u=1-\epsilon^{-}\}$. This proves that $u$ satisfies the free boundary condition. The solution cannot be trivial as it satisfies the free boundary condition. Thus a solution to \eqref{probmain} exists that obeys the free boundary condition besides the Dirichlet boundary condition.

\begin{remark}\label{existence1}
\noindent Before we prove the existence of a solution to the problem \eqref{probmain}, we sharpen a few tools which that be used in the proof. We observe that
$$I_{\alpha}(u)\leq I(u)~\text{in}~W_0^{1,p}(\Omega).$$ 	
Furthermore, we have
\begin{align}\label{yang-per1}
\begin{split}
I_{\alpha}(u)&\geq\frac{1}{p}\|u\|^p-\frac{\lambda}{q}\int_{\Omega}(u^+)^qdx\\
&\geq \frac{1}{p}\|u\|^p-\frac{C\lambda}{q}\|u\|^q
\end{split}
\end{align}
by Sobolev embedding. Therefore, there exists $\rho_0=\rho_0(\nu,\lambda)>0$ such that
\begin{align}\label{yang-per2}
\begin{split}
I_{\alpha}(u)&\geq\frac{1}{2p}\|u\|^p
\end{split}
\end{align}
for $\|u\|\leq \rho_0$. Furthermore, for a fixed nonzero $u$ we have $I_{\alpha}(tu)\to-\infty$ as $t\to\infty$ and hence there exists a function $v_0$ such that $I_{\alpha}(v_0)<0=I_{\alpha}(0)$. This indicates that the set  $$\Lambda_{\alpha}:=\{\xi\in C([0,1];W_0^{1,p}(\Omega)):\xi(0)=0, I_{\alpha}(\xi(1))<0\}$$ is nonempty. Hence by the Mountain pass theorem we have
\begin{align}\label{yang-per3}
\begin{split}
c_{\alpha}&:=\underset{\xi\in\Lambda_{\alpha}}{\inf}~\underset{u\in\xi([0,1])}{\max}I_{\alpha}(u).
\end{split}
\end{align}
Since by the definition of the set $\Lambda_{\alpha}$ we have $\Lambda\subset\Lambda_{\alpha}$ and
\begin{align}\label{yang-per4}
\begin{split}
c_{\alpha}\leq\underset{u\in\xi([0,1])}{\max}I_{\alpha}(u)\leq\underset{u\in\xi([0,1])}{\max}I(u)
\end{split}
\end{align}
for all $\xi\in\Lambda$. This implies that $c_{\alpha}\leq c$.
\end{remark}

\begin{remark}\label{fin_C}
	Let $\phi_1$ be the first eigenfunction pertaining to the first eigenvalue $\lambda_1$ of $(-\Delta_p)$. We observe that
	\begin{align}\label{ineq1}
	I(t\phi)\to-\infty~\text{as}~t\to\infty.
	\end{align}
	Thus there exists $t_*>0$ such that $I(t_*\phi_1)<0$. Consider the path which is defined by $\xi(t)=t\phi_1$ for $t\in[0,t_*]$. Then $\xi$ yields a path from $\Lambda$ on which
	\begin{align}\label{ineq2}
	I(t\phi_1)\leq\mathcal{D}:=\underset{t\geq 0}{\sup}\int_{\Omega}\left(\frac{\lambda_1}{p}t^p\phi_1+1\right)dx.
	\end{align}
	Therefore $c\leq \mathcal{D}$.
\end{remark}
\begin{proof}[Proof]
	From the Remark \ref{fin_C} we conclude that $c_{\alpha}\leq c\leq \mathcal{D}$. Since $I_{\alpha}$ obeys the (PS) condition, hence a limit of the (PS) sequence, say $u_{\alpha}$, can be proved to be a critical point of $I_{\alpha}$. Thus we have $I_{\alpha}(u_{\alpha})=c_{\alpha}$. Now consider a sequence $\alpha_n$ which converges to zero and name $u_{\alpha_n}$ as $u_n$, $c_{\alpha_n}$ as $c_n$. By the Lemma \ref{convergence1} $(i)-(ii)$ we extract a subsequence of $(u_n)$, still denoted by the same name, converges uniformly in $\bar{\Omega}$, locally in $C^1(\bar{\Omega}\setminus\{u=1\})$ and strongly in $W_0^{1,p}(\Omega)$, to a locally Lipschitz function $u\in W_0^{1,p}(\Omega)\cap C^{1,a}(\bar{\Omega}\setminus H(u))$. Moreover, from \eqref{yang-per2} in Remark \ref{existence1} we have $\lim\sup I_{\alpha_n}(u_n)=\lim\sup c_n\geq \frac{r_0}{4}>0$. This indicates that one of the limit conditions $\lim\sup I_{\alpha_n}(u_n)>0$ or $\lim\inf I_{\alpha_n}(u_n)<0$ in Lemma \ref{convergence1} indeed holds. Hence by the paragraph succeeding Lemma \ref{convergence1} $(iv)$ we conclude that $u$ is nontrivial. Furthermore, by Lemma \ref{convergence1} we have that $u$ is a $C^{2}(\Omega)$ weak solution of $-\Delta_p u=\lambda(u-1)_+^q$ in $\Omega\setminus\partial\{u>1\}$ and the free boundary condition $|\nabla u^+|^p-|\nabla u^-|^p=\frac{p}{p-1}$ in sense of \eqref{weak_conv_14} in addition to vanishing on the boundary $\partial\Omega$.
\end{proof}
\begin{remark}\label{lim_case_reason}
	We note that the Lemma \ref{convergence1} and \ref{conv_res1} are still open questions to be answered for problems that are superlinear in nature and for $p \neq 2$.
\end{remark}
\subsection*{Funding} The research of Jiabin Zuo was supported by the Guangdong Basic and Applied Basic Research Foundation (2022A1515110907) and the Guangdong Basic and Applied Basic Research Foundation (2024A1515012389).
\section*{Acknowledgement}
The author thanks the anonymous referee(s) for his/her constructive comments besides the community of the free boundary value problems for injecting a new lease of life to the study of elliptic PDEs. The author also thanks the NBHM, India (02011/47/2021/NBHM(R.P.)/R\&D II/2615) for the financial support.
\section*{Data availability statement}
Data sharing not applicable to this article as no datasets were generated or analysed during the current study.
\section*{Author Contributions Statement}
All authors wrote, prepared the figures and reviewed the manuscript.



\end{document}